\newcommand{\N}{\ensuremath{\mathbb{N}}}
\newcommand{\Mat}{\ensuremath{\mathbb{M}}}
\newcommand{\R}{\ensuremath{\mathbb{R}}}
\newcommand{\C}{\ensuremath{\mathbb{C}}}
\newcommand{\VN}{{\rm VN}}
\renewcommand{\leq}{\ensuremath{\leqslant}}
\renewcommand{\geq}{\ensuremath{\geqslant}}
\newcommand{\n}{\noindent}
\newcommand{\qed}{\hfill \vrule height6pt  width6pt depth0pt}
\newcommand{\norm}[1]{ \| #1  \|}
\newcommand{\bnorm}[1]{ \big\| #1  \big\|}
\newcommand{\Bnorm}[1]{ \Big\| #1  \Big\|}
\newcommand{\bgnorm}[1]{ \bigg\| #1  \bigg\|}
\newcommand{\Bgnorm}[1]{ \Bigg\| #1  \Bigg\|}
\newcommand{\xra}{\xrightarrow}
\newcommand{\ot}{\otimes}
\newcommand{\ovl}{\overline}
\newcommand{\ul}{\mathcal{U}}
\newcommand{\reg}{{\rm reg}}
\newcommand{\co}{\colon}
\newtheorem{thm}{Theorem}[section]
\newtheorem{prop}[thm]{Proposition}
\newtheorem{lemma}[thm]{Lemma}
\newtheorem{remark}[thm]{Remark}
\newenvironment{proof}[1][]{\noindent {\it Proof #1} : }{\hbox{~}\qed
\smallskip
}
\numberwithin{equation}{section}
\begin{document}
\selectlanguage{english}
\title{\bfseries{On a conjecture of Pisier on the analyticity of semigroups}}
\date{}
\author{\bfseries{Cédric Arhancet}}

\maketitle

\begin{abstract}
We show that the analyticity of semigroups $(T_t)_{t \geq 0}$ of selfadjoint contractive Fourier multipliers on $L^p$-spaces of compact abelian groups is preserved by the tensorisation of the identity operator of a Banach space for a large class of K-convex Banach spaces, answering partially a conjecture of Pisier. We also give versions of this result for some semigroups of Schur multipliers and Fourier multipliers on  noncommutative $L^p$-spaces. Finally, we give a precise description of semigroups of Schur multipliers to which the result of this paper can be applied.
\end{abstract}


\makeatletter
 \renewcommand{\@makefntext}[1]{#1}
 \makeatother
 \footnotetext{
The author is supported by the research program ANR 2011 BS01 008 01.\\
 2010 {\it Mathematics subject classification:}
 46L51, 46L07, 47D03. 
\\
{\it Key words}: noncommutative $L^p$-spaces, operator spaces,
analytic semigroups, K-convexity, Fourier multipliers, Schur multipliers.}

\section{Introduction}

In the early eighties, in a famous paper on the geometry of Banach spaces, Pisier \cite[Theorem~2.1]{Pis3} showed that a Banach space $X$ does not contain $\ell^1_n$'s uniformly if and only if the tensorisation $P \ot Id_X$ of the Rademacher projection
\begin{equation*}
\begin{array}{cccc}
   P   \co &  L^2(\Omega)   &  \longrightarrow   &  L^2(\Omega)  \\
           &   f  &  \longmapsto       & \displaystyle \sum_{k=1}^\infty \Big(\int_\Omega f\varepsilon_k \Big)\varepsilon_k \\
\end{array}
\end{equation*}
induces a bounded operator on the Bochner space $L^2(\Omega,X)$ where $\Omega$ is a probability space and where $\varepsilon_1,\varepsilon_2,\ldots$ is a sequence of independant random variables with $P(\varepsilon_k=1)=P(\varepsilon_k=-1)=\frac{1}{2}$. Such a Banach space $X$ is called K-convex. The heart of his proof relies on the fact, proved by himself in his article, that if $X$ is a K-convex Banach space then any $w^*$-continuous semigroup $(T_t)_{t \geq 0}$ of positive unital selfadjoint Fourier multipliers on a locally compact abelian group $G$ induces a strongly continuous bounded analytic semigroup $(T_t \ot Id_X)_{t \geq 0}$ of contractions on the Bochner space $L^p(G,X)$ for any $1 < p < \infty$. In 1981, in the seminars {\cite{Pis1} and \cite{Pis2} which announced the results of his paper, he stated several natural questions raised by his work. In particular, he conjectured \cite[page 17]{Pis1} that the same property holds for any $w^*$-continuous semigroup $(T_t)_{t \geq 0}$ of selfadjoint contractive operators on $L^\infty(\Omega)$ where $\Omega$ is a measure space (see also the recent preprint \cite[Problem 11]{Xu} for a more general question). Note that it is well-known \cite[III2 Theorem 1]{Ste} that such a semigroup induces a strongly continuous bounded analytic semigroup of contractions on the associated $L^p$-space $L^p(\Omega)$ and the conjecture says that the property of analyticity is preserved by the tensorisation of the identity $Id_X$ of a K-convex Banach space $X$. 

Using operator space theory (see \cite{ER}, \cite{Pau} and \cite{Pis7}), a quantised theory of Banach spaces, we are able to give the following partial answer to this purely Banach spaces question. First of all, let us recall that an operator space $E$ is OK-convex if the vector valued Schatten space $S^p(E)$ is K-convex for some (equivalently all) $1<p<\infty$. It means that the Rademacher projection $P$ is completely bounded. This notion was introduced by \cite{JP} and is the noncommutative version of the property of K-convexity. Our main result is the following theorem.


\begin{thm}
\label{Th main commutative}
Suppose that $G$ is a compact abelian group. Let $(T_t)_{t \geq 0}$ be a $w^*$-continuous semigroup of selfadjoint contractive Fourier multipliers on $L^\infty(G)$. Let $X$ be a K-convex Banach space isomorphic to a Banach space $E$ which admits an OK-convex operator space structure. Consider $1<p<\infty$. Then $(T_t)_{t\geq 0}$ induces a strongly continuous  bounded analytic semigroup $(T_t \ot Id_X)_{t \geq 0}$ of contractions on the Bochner space $L^p(G,X)$.
\end{thm}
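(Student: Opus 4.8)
The plan is to isolate \emph{analyticity} as the only nontrivial assertion. Contractivity of $T_t \ot \Id_X$ is immediate: selfadjointness together with $L^\infty$-contractivity forces each $T_t$ to be convolution by a measure $\mu_t$ on $G$ with total variation $\norm{\mu_t}_{M(G)} = \norm{T_t}_{L^\infty \to L^\infty} \le 1$, so that $T_t \ot \Id_X = \int_G \tau_s\, d\mu_t(s)$ is an average of translations $\tau_s$, each an isometry of $L^p(G,X)$, and hence a contraction for \emph{every} Banach space $X$; strong continuity on $L^p(G,X)$ for $1<p<\infty$ then follows from the $w^*$-continuity and a density argument. Moreover K-convexity and the existence of an OK-convex operator space structure are invariant under isomorphism, and bounded analyticity is preserved under the similarity $\Id \ot u$ induced by an isomorphism $u \co X \to E$; I may therefore replace $X$ by $E$ and reduce everything to proving that $(T_t \ot \Id_E)_{t\ge0}$ is bounded analytic on $L^p(G,E)$.

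Writing $-A$ for the generator, the symbols of $T_t$ are $e^{-t\psi(\chi)}$ for a function $\psi \co \hat G \to [0,\infty)$ (reality of $\psi$ from selfadjointness, nonnegativity from $L^2$-contractivity). By the standard criterion, $(T_t \ot \Id_E)$ is bounded analytic if and only if
\[
\sup_{t>0}\, \bnorm{\, t A\,T_t \ot \Id_E\,}_{B(L^p(G,E))} < \infty ,
\]
that is, if and only if the Fourier multipliers with symbols $t\psi(\chi)e^{-t\psi(\chi)}$ are uniformly bounded on $L^p(G,E)$. The point of departure is Pisier's theorem quoted above: when the semigroup is positive and unital, $\psi$ is conditionally negative definite by Schoenberg's theorem, the semigroup is a second quantisation of an Ornstein--Uhlenbeck type semigroup on a \emph{commutative} Gaussian space, and the required uniform bound is exactly the boundedness of the Rademacher projection $P$, i.e. the K-convexity of $E$.

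The heart of the argument must therefore treat the loss of positivity. A general selfadjoint \emph{contractive} semigroup need not be positivity-preserving, so $\mu_t$ is only a signed or complex measure and $\psi$ need not be conditionally negative definite; the commutative Gaussian realisation is then unavailable, and the crude splitting $e^{-z\psi}=e^{-(\operatorname{Re}z)\psi}\,e^{-i(\operatorname{Im}z)\psi}$ is useless because the unimodular factors $e^{-i\beta\psi(\chi)}$ are not uniformly bounded on $L^p(G,E)$ under K-convexity alone. I would instead realise the family $t\psi e^{-t\psi}$ through a \emph{noncommutative} second quantisation/transference, in which the relevant averaging operator acts on a vector valued Schatten space $S^p(E)$ rather than on a scalar Bochner space. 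In that setting the uniform bound becomes precisely the \emph{complete} boundedness of the Rademacher projection, namely the OK-convexity of $E$; transferring this completely bounded estimate back to $L^p(G,E)$ and feeding it into the analyticity criterion delivers the desired uniform bound, hence the bounded analyticity of $(T_t \ot \Id_E)$, and with the first paragraph the full statement.

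The main obstacle is exactly this noncommutative transference: producing, for an arbitrary nonnegative real symbol $\psi$, a representation of $\{t\psi e^{-t\psi}\}_{t>0}$ (equivalently of $\{e^{-z\psi}\}$ on a subsector $\{|\arg z|\le\theta\}$) whose boundedness is governed by the Rademacher projection, while assuming no positivity. Compactness of $G$, so that $\hat G$ is discrete, is what permits the finite-dimensional approximation of $\psi$ and the functorial construction of this realisation, and the operator space structure of $E$ is precisely what upgrades the scalar bound behind Pisier's argument to the completely bounded bound that the nonpositive contractive case forces upon us.
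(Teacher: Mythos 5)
Your first paragraph is fine (contractivity via Wendel-type representation $T_t=$ convolution by $\mu_t$ with $\norm{\mu_t}_{M(G)}\leq 1$, strong continuity, and the reduction from $X$ to $E$ since bounded analyticity survives the similarity $\Id\ot u$), and you correctly put your finger on the real difficulty: without positivity the symbol $\psi$ need not be conditionally negative definite, so the commutative Gaussian second quantisation behind Pisier's positive unital case is unavailable. But from that point on you do not give a proof. The entire burden is placed on a ``noncommutative second quantisation/transference'' realising $\{t\psi e^{-t\psi}\}_{t>0}$ so that its uniform boundedness on $L^p(G,E)$ is governed by the complete boundedness of the Rademacher projection, and you then explicitly concede that producing this realisation \emph{is} the main obstacle. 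That is precisely the step a proof must supply; naming it does not close the gap. Moreover you route the argument through the generator criterion $\sup_{t>0}\norm{tAT_t}<\infty$, for which no mechanism compatible with your sketch is indicated.

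For comparison, the paper closes exactly this gap by a different and quite concrete chain: identify $L^\infty(G)\simeq \VN(\hat G)$ with $\hat G$ discrete abelian (hence amenable), transfer the Fourier multipliers to Schur multipliers on $B(\ell^2_{\hat G})$ via Neuwirth--Ricard, complete the selfadjoint contractive Schur multiplier $\check T_t$ to a $2\times 2$ block Schur multiplier $W_t$ which is unital, selfadjoint and \emph{completely positive}, and then invoke Ricard's Markov dilation to get a Rota dilation $(W_{t/2})^2=Q\,\mathbb{E}_1\,\pi$ into a hyperfinite von Neumann algebra. This converts $\Id-(W_{t/2})^2$ into $Q(\Id-\mathbb{E}_1)\pi$, and after taking $n$-fold tensor powers (the absorption lemma) the estimate reduces to Pisier's lemma on products of $n$ mutually commuting contractive projections in the K-convex space $L^p(M^{\ot n},E)$ (K-convex because $E$ is OK-convex and $M$ is hyperfinite), yielding $\bnorm{(\Id-T_t)^n\ot \Id_E}\leq\rho^n<2^n$; Beurling's criterion then gives bounded analyticity. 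So the positivity you lack is \emph{manufactured} by the block-matrix trick, and the ``noncommutative second quantisation'' you gesture at is Ricard's dilation into a CAR-type algebra $\Gamma_{-1}$ --- but the analyticity is extracted through the Beurling/commuting-projections route, not through a uniform bound on $t\psi e^{-t\psi}$. As it stands your proposal is a plausible research plan with its central construction missing, not a proof.
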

This result can be used, by example, in the case
where the Banach space $X$ is an $L^q$-space or a Schatten space $S^q$ with $1<q<\infty$. Our methods also give a result for some $w^*$-continuous semigroups of Schur multipliers and a generalization for semigroups of Fourier multipliers on amenable discrete groups. See also the forthcoming paper \cite{Arh3} for related results.

The paper is organized as follows. Section 2 gives a brief presentation of vector valued noncommutative $L^p$-spaces, Fourier multipliers on group von Neumann algebras and Schur multipliers. We introduce here some notions which are relevant to our paper. The next section 3 contains a proof of Theorem \ref{Th main commutative}. Finally, in Section 4, we describe the semigroups of Schur multipliers to which the results of this paper can be applied. This result is of independent interest.

\section{Preliminaries}
The readers are referred to \cite{ER}, \cite{Pau} and \cite{Pis7}
for details on operator spaces and completely bounded maps and to
the survey \cite{PiX} for noncommutative $L^p$-spaces and the references therein.

If $T\co E \to F$ is a completely bounded map between two operators spaces $E$ and $F$, we denote by $\norm{T}_{cb, E \to F}$ its completely bounded norm.

The theory of vector valued noncommutative $L^p$-spaces was initiated by Pisier \cite{Pis5} for the case where the underlying von Neumann algebra is hyperfinite and equipped with a normal semifinite faithful trace. Suppose $1 \leq p < \infty$. Under theses assumptions, for any operator space $E$, we can define by complex interpolation
\begin{equation}
\label{Def vector valued Lp non com}
L^p(M,E)=\big(M \ot_{\min} E, L^1(M)\widehat{\ot}
E\big)_{\frac{1}{p}},
\end{equation}
where $\ot_{\min}$ and $\widehat{\ot}$ denote the injective and the projective tensor product of operator spaces. 

If $I$ is an index set then we denote by $B(\ell^2_I)$ the von Neumann algebra of bounded operators on the Hilbert space $\ell^2_I$. Using its canonical trace, we obtain the vector valued Schatten space $S^p_I(E)=L^p(B(\ell^2_I),E)$. With $E=\C$, we recover the classical Schatten space $S^p_I$. Sometimes, we will use the notation $S^p(B(\ell^2_I)^{\ot n},E)$ for the space $L^p(B(\ell^2_I)^{\ot n},E)$.

Note the following extension properties of some linear maps between noncommutative $L^p$-spaces, see \cite{Pis4}, \cite[Lemma 6.1]{Pis8} and \cite[Lemma 4.1]{Arh1}.
\begin{prop}
\label{prop-tensorisation of CP maps}
Let $M$ and $N$ be von Neumann algebras equipped with normal semifinite faithful traces.
\begin{enumerate}
	\item Let $T \co M \to N$ be a trace preserving unital normal completely positive map. Suppose $1 \leq p < \infty$. Then $T$ induces a complete contraction $T \co L^p(M) \to L^p(N)$.
	\item Suppose that $M$ and $N$ are hyperfinite. Let $E$ be an operator space. Let $T \co M \to N$ be a complete contraction that also induces a complete contraction on $L^1(M)$. Suppose $1 \leq p \leq \infty$. Then the operator $T \ot Id_E$ induces a completely contractive operator from $L^p(M,E)$ into $L^p(N,E)$.
\end{enumerate}
\end{prop}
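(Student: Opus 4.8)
The plan is to handle both statements by a single mechanism — pin down the two endpoint estimates at $p=\infty$ and $p=1$, then let complex interpolation do the rest — since the scale $L^p(M)=(M,L^1(M))_{\frac1p}$ and its vector valued analogue \eqref{Def vector valued Lp non com} are defined precisely as interpolation spaces, and interpolation is contractive and functorial at the operator space level.

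For part (1) the endpoint at $p=\infty$ is immediate: a unital completely positive map between von Neumann algebras is automatically completely contractive, so $T\co M\to N$ is a complete contraction on $L^\infty$. The endpoint at $p=1$ is the only place the trace preservation is used. To get a \emph{complete} contraction I would pass to matrix amplifications: for each $n$ the map $Id_{\Mat_n}\ot T\co \Mat_n\otvn M\to \Mat_n\otvn N$ is again unital, completely positive and trace preserving (for $\tr\ot\tau_M$ and $\tr\ot\tau_N$), so it suffices to prove the plain $L^1$-contraction $\norm{T(x)}_1\leq\norm{x}_1$ for one such map and then take the supremum over $n$. For self-adjoint $x=x_+-x_-$, positivity gives $\norm{T(x)}_1\leq\norm{T(x_+)}_1+\norm{T(x_-)}_1$, and since $T(x_\pm)\geq0$ trace preservation rewrites each term as $\tau_M(x_\pm)$, whence $\norm{T(x)}_1\leq\tau_M(x_+)+\tau_M(x_-)=\norm{x}_1$; the general case follows from the self-adjoint one. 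Interpolating the two complete contractions along $L^p(M)=(M,L^1(M))_{\frac1p}$ yields the claim.

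Part (2) runs on the same skeleton but now reads the endpoints off the interpolation definition \eqref{Def vector valued Lp non com}, which is exactly where hyperfiniteness of $M$ and $N$ is needed (it is what makes Pisier's vector valued spaces available). At $p=\infty$ one has $L^\infty(M,E)=M\ot_{\min}E$, and functoriality of the minimal tensor product turns the complete contraction $T$ into a complete contraction $T\ot Id_E\co M\ot_{\min}E\to N\ot_{\min}E$. At $p=1$ one has $L^1(M,E)=L^1(M)\widehat{\ot}E$, and the corresponding functoriality of the projective operator space tensor product, applied to the assumed complete contraction $T\co L^1(M)\to L^1(N)$, gives that $T\ot Id_E$ is a complete contraction at the $p=1$ endpoint as well. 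Complex interpolation between the two endpoints then produces the desired complete contraction $T\ot Id_E\co L^p(M,E)\to L^p(N,E)$ for every $1\leq p\leq\infty$.

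The only genuinely non-formal input is the $L^1$-estimate in part (1); everything else is functoriality of the two tensor products and of complex interpolation. Accordingly, the point that needs care is the interpolation step itself: one must check that the couples $(M,L^1(M))$ and $(N,L^1(N))$ (respectively their $E$-valued versions) are compatible and that $T$ (respectively $T\ot Id_E$) is a morphism of couples agreeing on the intersection, so that the interpolated map is genuinely the asserted one, and that the interpolation is carried out in the operator space category so that \emph{contraction} at the endpoints upgrades to \emph{complete contraction} in between.
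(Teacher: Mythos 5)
The paper does not actually prove this proposition; it imports it from \cite{Pis4}, \cite[Lemma 6.1]{Pis8} and \cite[Lemma 4.1]{Arh1}. Your two-endpoints-plus-complex-interpolation scheme is exactly the argument underlying those references, so the route is the right one. In particular part (2) is complete as you present it: the two hypotheses are precisely the two endpoint estimates, the functoriality of $\ot_{\min}$ and $\widehat{\ot}$ under complete contractions is standard, and Pisier's interpolation theorem for operator spaces applied to the couple in \eqref{Def vector valued Lp non com} upgrades them to every $1\leq p\leq\infty$. Likewise, computing the cb-norm on $L^1$ via the amplifications $Id_{\Mat_n}\ot T$ is legitimate, since $S^1_n[L^1(M)]=L^1(\Mat_n\otvn M)$ in Pisier's operator space structure.

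The one step that does not survive as written is the sentence ``the general case follows from the self-adjoint one'' in the $L^1$-estimate of part (1). The obvious reduction, $x=a+ib$ with $a,b$ self-adjoint, only yields $\norm{T(x)}_1\leq\norm{a}_1+\norm{b}_1\leq 2\norm{x}_1$, which is not a contraction. You can repair this with a device you already have in play: apply your self-adjoint estimate to the unital, trace-preserving, completely positive map $Id_{\Mat_2}\ot T$ and to the self-adjoint element $\left[\begin{smallmatrix} 0 & x \\ x^* & 0\end{smallmatrix}\right]$, whose $L^1$-norm equals $2\norm{x}_1$ and whose image is $\left[\begin{smallmatrix} 0 & T(x) \\ T(x)^* & 0\end{smallmatrix}\right]$ (positive maps are $*$-preserving) with $L^1$-norm $2\norm{T(x)}_1$. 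Alternatively, the cleaner standard argument (see \cite[Lemma 1.1]{JX}) is by duality: the adjoint $T^\dagger\co N\to M$ determined by $\tau_M(T^\dagger(y)x)=\tau_N(yT(x))$ is completely positive with $T^\dagger(1)\leq 1$ --- this is exactly where trace preservation enters --- hence contractive on $N$, and then $\norm{T(x)}_1=\sup\{|\tau_N(T(x)y)|\co y\in N,\ \norm{y}\leq 1\}=\sup\{|\tau_M(xT^\dagger(y))|\}\leq \norm{x}_1$ for all $x$, self-adjoint or not; amplifying by $\Mat_n$ gives the complete contraction. With either repair, your proof is correct.
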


In particular, this result applies to canonical normal conditional expectations between von Neumann algebras (see \cite[Theorem 10.1]{Str}). If $1<p<\infty$, recall that a linear map $T \co L^p(M) \to L^p(N)$ between noncommutative $L^p$-spaces on hyperfinite von Neumann algebras (equipped with normal semifinite faithful traces) is said to be regular \cite{Pis4} if for any operator space $E$ the linear map $T \ot Id_E$ induces a bounded operator from $L^p(M,E)$ into $L^p(N,E)$. The norm $\norm{T}_{\reg}$ denote the best constant $C$ such that $\norm{T \ot Id_E}_{L^p(M,E) \to L^p(N,E)} \leq C$ for any operator space $E$. The above proposition gives examples of contractively regular maps (i.e. $\norm{T}_{\reg} \leq 1$). 

Suppose $1 \leq p<\infty$. Let $(T_t)_{t \geq 0}$ be a $C_0$-semigroup of contractively regular operators on a noncommutative $L^p$-space $L^p(M)$ of a hyperfinite von Neumann algebra $M$. Then, using \cite[Proposition 5.3]{EN}, it is not difficult to prove that for any operator space $E$ the semigroup $(T_t \ot Id_{E})_{t \geq 0}$ of contractive operators acting on the vector valued $L^p$-space $L^p(M,E)$ is strongly continuous.

%

%

Suppose that $G$ is a discrete group. We denote by $e_G$ the neutral element of $G$. For $f \in \ell^1_G$, we write $L_f$ for the left convolution by $f$ acting on $\ell^2_G$ by:
$$
L_f (h)(g') =\sum_{g\in G} f(g)h(g^{-1}g')
$$
where $h \in \ell^2_G$ and $g' \in G$. Let $\VN(G)$ be the von Neumann algebra generated by the set $\{L_f\ :\ f \in \ell^1_G\}$. It is called the group von Neumann algebra of $G$ and is equal to the von Neumann algebra generated by the set $\{\lambda_g \ :\ g \in G\}$, where $\lambda_g$ is the left translation acting on $\ell^2_G$ defined by $\lambda_g(h)(g')=h(g^{-1}g')$. It is an finite algebra and its normalized normal finite faithful trace is given by
$$
\tau_{G}(x)=\big\langle\epsilon_{e_G},x(\epsilon_{e_G})\big\rangle_{\ell^2_G}
$$
where $(\epsilon_g)_{g\in G}$ is the canonical basis of $\ell^2_G$ and $x \in \VN(G)$. Recall that the von Neumann algebra $\VN(G)$ is hyperfinite if and only if $G$ is amenable \cite[Theorem 3.8.2]{SS}.

For a locally compact abelian group $G$, the Fourier transform of $f \in L^1(G)$ is defined on the dual group $\hat{G}$ of $G$ by
$$
\hat{f}(\gamma) =\int_G f(x)\ovl{\gamma(x)}dx
$$
for $\gamma \in \hat{G}$. Moreover, it is well known that a locally compact abelian group $G$ is discrete if and only if the dual group $\hat{G}$ is compact.

If $G$ is a discrete abelian group then ($\VN(G)$, $\tau_G)$ is equivalent as a von Neumann algebra to $L^\infty(\hat{G})$ with the usual integration on the dual group $\hat{G}$ of $G$ under the mapping 
\begin{equation}
\label{Case abelian AVN}
\begin{array}{cccc}
           &  \VN(G)   &  \longrightarrow   &  L^\infty(\hat{G})  \\
           &  L_f   &  \longmapsto       &  \hat{f}.  \\
\end{array}
\end{equation}

Let $G$ be a discrete group. A Fourier multiplier on $\VN(G)$ is a normal linear map $T \co \VN(G) \to \VN(G)$ such that there exists a complex function $\varphi \co G \to \C$ such that for any $g \in G$ we have $T(\lambda_g)=\varphi_g \lambda_g$. In this case, we also denote $T$ by
$$
\begin{array}{cccc}
   M_\varphi :   &    \VN(G)      &  \longrightarrow   & \VN(G)   \\
          &    \lambda_g  &  \longmapsto       & \varphi_g\lambda_g.   \\
\end{array}
$$
If the discrete group $G$ is amenable then every contractive Fourier multiplier $ M_\varphi \co \VN(G) \to \VN(G)$ is completely contractive, see \cite[Corollary 1.8]{DCH}, \cite[Theorem 1]{Los} and \cite[Corollary 4.3]{Spr}.

If $I$ is an index set and if $E$ is a vector space, we write
$\Mat_I$ for the space of the $I \times I$ matrices with entries in
$\C$ and $\Mat_I(E)$ for the space of the $I \times I$ matrices with
entries in $E$. 

Let $A=[a_{ij}]_{i,j\in I}$ be a matrix of $\mathbb{M}_I$. By definition, the Schur multiplier on $B\big(\ell^2_I\big)$ associated with this matrix is the unbounded linear operator $M_A$ whose domain $D(M_A)$ is the space of all $B=[b_{ij}]_{i,j\in I}$ of $B\big(\ell^2_I\big)$ such that $[a_{ij}b_{ij}]_{i,j\in I}$ belongs to $B\big(\ell^2_I\big)$, and whose action on $B=[b_{ij}]_{i,j\in I}$ is given by $M_A(B)=[a_{ij}b_{ij}]_{i,j\in I}$. For any $i,j \in I$, the matrix $e_{ij}$ belongs to $D(M_A)$, hence $M_A$ is densely defined for the weak* topology. Suppose $1\leq p \leq \infty$. If for any $B \in S^p_{I}$, we have $B \in D(M_A)$ and the matrix $M_{A}(B)$ represents an element of $S^p_{I}$, by the closed graph theorem, the matrix $A$ of $\Mat_{I}$ defines a bounded Schur multiplier $M_A \co S^p_{I} \to S^p_{I}$. We have a similar statement for bounded Schur multipliers on $B\big(\ell^2_I\big)$. Recall that every contractive Schur multiplier $M_A \co B\big(\ell^2_I\big) \to B\big(\ell^2_I\big)$ is completely contractive \cite[Corollary 8.8]{Pau}).
It is well-known that a matrix $A$ of $\mathbb{M}_I$ induces a completely positive (or positive) Schur multiplier $M_A \co B\big(\ell^2_I\big) \to B\big(\ell^2_I\big)$ if and only if for any finite set $ F\subset I$ the matrix $[a_{i,j}]_{i,j \in F}$ is positive, see \cite[Exercice 8.7]{Pau}) and \cite[Theorem C.1.4]{BHV}. 

Let $M$ be a von Neumann algebra equipped with a normal semifinite faithful trace $\tau$. Suppose that $T \co M \to M$ is a normal contraction. We say that $T$ is selfadjoint if for any $x,y \in M \cap L^1(M)$ we have
$$
\tau\big(T(x)y^*\big)=\tau\big(x(T(y))^*\big).
$$
In this case, it is not hard to show that the restriction $T|M \cap
L^1(M)$ extends to a contraction $T \co L^1(M) \to L^1(M)$. By complex interpolation, for any $ 1\leq p <\infty$, we obtain a contractive map $T \co L^p(M) \to L^p(M)$. Moreover, the operator $T \co L^2(M) \to L^2(M)$ is selfadjoint. If $T \co M \to M$ is actually a normal selfadjoint complete contraction, it is easy to see that the map $T \co L^p(M) \to L^p(M)$ is completely contractive for any $1 \leq p < \infty$. It is not difficult to show that a contractive Fourier multiplier $M_\varphi \co \VN(G) \to \VN(G)$ is selfadjoint if and only if $\varphi \co G \to \C$ is a real function. Finally, one can prove that a contractive Schur multiplier $M_A$ is selfadjoint if and only if $A$ is a real matrix.

Let $X$ be a non-empty set. A kernel on $X$ is a function $\psi \co X \times X \to \C$. The kernel is called hermitian if $\psi(y,x) = \ovl{\psi(x,y)}$ for any $x,y \in X$. It is positive definite \cite[Definition 1.1, page 67]{BCR} \cite[Definition C.1.1]{BHV} if for any integer $n \in \N$, any $x_1,\ldots,x_n\in X$ and any $c_1,\ldots,c_n\in\C$ we have
$$
\sum_{i,j=0}^n c_i \ovl{c_j}\psi(x_i,x_j) \geq 0.
$$
It is called conditionally negative definite \cite[Definition C.2.1]{BHV} \cite[Definition 1.1, page 67]{BCR} if it is hermitian and if for any integer $n \in \N$, any $x_1,\ldots,x_n \in X$ and any $c_1,\ldots,c_n \in \C$ such that $\sum_{i=1}^n c_i = 0$ we have
$$
\sum_{i,j=1}^n c_i \ovl{c_j}\psi(x_i,x_j) \leq 0.
$$
The set of all conditionally of negative type kernels on $X$ is a convex
cone, that is, if $\psi_1$ and $\psi_2$ are kernels conditionally of negative type then so is $s\psi_1+t\psi_2$ for all positive real numbers $s, t \geq 0$. If $\psi$ is a positive definite kernel, it is obvious that $-\psi$ is a conditionally negative definite kernel. Moreover, any constant kernel $\psi \co X \times X \to \C$ is conditionally of negative type. By \cite[Proposition 3.2, page 82]{BCR} (or \cite[Theorem C.2.3]{BHV}), if $\psi \co X \times X \to \R$ is a real-valued conditionally negative definite kernel that vanishes on the diagonal $\{(x,x) \mid x\in X\}$ then there exist a real Hilbert space $H$ and a map $\xi \co X \to H$ such that
$$
k(x,y) =\bnorm{\xi(x)-\xi(y)}_{H}^2.
$$
\section{Analyticity of semigroups on vector valued $L^p$-spaces}

Let $X$ be a Banach space. A strongly continuous semigroup
$(T_t)_{t\geq 0}$ is called bounded analytic if there exist
$0<\theta<\frac{\pi}{2}$ and a bounded holomorphic extension
$$
\begin{array}{cccc}
    &  \Sigma_\theta   &  \longrightarrow   &  B(X)  \\
    &  z   &  \longmapsto       &  T_z  \\
\end{array}
$$
where $\Sigma_\theta=\{z\in\C^*\ :\ \vert{\rm Arg}(z)\vert <\theta\}$ denotes the open sector of angle $2\theta$ around the positive real axis $\R_+$. See \cite{ABHN}, \cite{EN} and \cite{Haa} for more information on this notion. We need the following theorem which is a corollary \cite[Theorem 1.3]{Pis3} of a result of Beurling \cite[Theorem III]{Beu} (see also \cite[Theorem 2.1]{Pis1}, \cite[Corollary 2.5]{Fac} and \cite{Hin}).

\begin{thm}
\label{Théorème de Beurling} Let $X$ be a Banach space. Let
$(T_t)_{t\geq 0}$ be a strongly continuous semigroup of contractions
on $X$. Suppose that there exists some integer $n \geq 1$ such that
for any $t> 0$
\begin{equation*}\label{}
\bnorm{(Id_{X}-T_t)^n}_{X\to X} < 2^{n}.
\end{equation*}
Then the semigroup $(T_t)_{t\geq 0}$ is bounded analytic.
\end{thm}
Moreover, we recall the following lemma, see \cite[Lemma 13.12]{DJT} and \cite[Lemma 1.5]{Pis3}.
\begin{lemma}
\label{Lemma of Pisier on projections} 
Suppose that $X$ is a K-convex Banach space. Then there exist a real number $0<\rho<2$ and an integer $n\geq 1$ such
that if $P_1,\ldots, P_n$ is any finite collection of mutually
commuting norm one projections on $X$, then
\begin{equation*}
\Bgnorm{\prod_{1\leq k \leq n}(Id_{X}-P_k)}_{X\to X}\leq \rho^n.
\end{equation*}
\end{lemma}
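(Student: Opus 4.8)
\emph{Proof plan.}
The plan is to concentrate the whole difficulty in a single strict inequality, and only then to feed in K-convexity. For an integer $m\geq 1$ set
$$
a_m=\sup\Bnorm{\prod_{1\leq k\leq m}(\Id_X-P_k)}_{X\to X},
$$
the supremum being taken over all families $P_1,\dots,P_m$ of mutually commuting norm one projections on $X$. Since $\norm{\Id_X-P_k}_{X\to X}\leq 2$ we have $1\leq a_m\leq 2^m$, and since two consecutive blocks of such projections commute and the corresponding products split, the sequence is submultiplicative: $a_{m+m'}\leq a_m a_{m'}$. By Fekete's lemma the limit $\rho_\infty=\lim_m a_m^{1/m}=\inf_m a_m^{1/m}$ exists and lies in $[1,2]$; as $a_m\leq 2^m$ forces $a_m^{1/m}\leq 2$, we get $\rho_\infty<2$ \emph{if and only if} $a_n<2^n$ for some single $n$. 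Moreover, when $\rho_\infty<2$ any $\rho\in(\rho_\infty,2)$ satisfies $a_N\leq\rho^N$ for all large $N$, and choosing one such $N$ as the integer $n$ of the statement proves the lemma. Thus everything reduces to producing \emph{one} integer $n$ with $a_n<2^n$.

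To produce such an $n$ I would argue by contradiction, assuming $a_m=2^m$ for every $m$, and invoke Pisier's theorem recalled in the introduction, namely that a K-convex space contains no $\ell^1_n$ uniformly. Fix $n$, let $\eta>0$ be small, and choose commuting norm one projections $P_1,\dots,P_n$ and a unit vector $x$ with $\norm{\prod_k(\Id_X-P_k)x}\geq(2-\eta)^n$. Put $y_0=x$, $y_m=(\Id_X-P_m)y_{m-1}$ and $z_m=P_my_{m-1}$, so that $y_m=y_{m-1}-z_m$ and $\norm{z_m}\leq\norm{y_{m-1}}$ (as $\norm{P_m}=1$). The chain $\norm{y_m}\leq\norm{y_{m-1}}+\norm{z_m}\leq 2\norm{y_{m-1}}$ is then nearly saturated at every step, which forces simultaneously $\norm{z_m}\approx\norm{y_{m-1}}$ and the near additivity $\norm{y_{m-1}-z_m}\approx\norm{y_{m-1}}+\norm{z_m}$. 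Exploiting that the $P_k$ commute, so that $X$ is decomposed by the $2^n$ mutually orthogonal idempotents $\prod_{k\in A}P_k\prod_{k\notin A}(\Id_X-P_k)$, I would upgrade these one-directional saturations into a family of normalised increments spanning a subspace $(1+o(1))$-isomorphic to $\ell^1_n$. Letting $\eta\to 0$ and $n\to\infty$ would then exhibit uniform copies of $\ell^1_n$ in $X$, contradicting K-convexity.

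The hard part is precisely this last upgrade: the product estimate saturates only \emph{one} chain of triangle inequalities, whereas an $\ell^1_n$ demands control of \emph{all} coefficient patterns, and reconciling the two genuinely requires the full commuting Boolean family attached to the $P_k$ together with the contractivity of each $P_k$ (and not of $\Id_X-P_k$, which may have norm $2$). For this reason I expect the clean route to bypass the soft extraction and proceed quantitatively, as in Pisier's original argument \cite[Lemma 1.5]{Pis3} and \cite[Lemma 13.12]{DJT}: the identity
$$
\prod_{1\leq k\leq n}(\Id_X-P_k)=\frac{1}{2^n}\sum_{\epsilon\in\{-1,1\}^n}\ \prod_{k:\,\epsilon_k=-1}(\Id_X-2P_k)
$$
exhibits the product as a Rademacher average, and the defining boundedness of the Rademacher projection for a K-convex space, with constant $K$, yields an explicit $\rho<2$ and an explicit $n$ depending only on $K$. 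I would in the end simply invoke that computation; obtaining the explicit constant, rather than the bare existence of one $n$ with $a_n<2^n$, is the quantitative heart of the matter.
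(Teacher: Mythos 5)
The paper does not actually prove this lemma: it is recalled from \cite[Lemma 1.5]{Pis3} and \cite[Lemma 13.12]{DJT}, and your proposal ultimately does the same thing, deferring the quantitative heart to those references after correctly writing down the Rademacher-average identity $\prod_{k}(\Id_X-P_k)=2^{-n}\sum_{\epsilon\in\{-1,1\}^n}\prod_{k:\,\epsilon_k=-1}(\Id_X-2P_k)$ on which their proofs rest. Your preliminary reductions (submultiplicativity of $a_m$, Fekete) are correct but not needed, and the sketched $\ell^1_n$-extraction is, as you yourself acknowledge, incomplete and abandoned, so the net content of your argument coincides with the paper's citation-only treatment.
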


We will use the useful next `absorption Lemma' which is a variant of \cite[Proposition 3.4]{Arh2}. The proof is left to the reader. 
\begin{lemma}
\label{Prop Fell absorption} Suppose $1\leq p < \infty$. Let $E$ be an operator space. For any positive integer $n\geq 1$ and any matrix $x \in \mathbb{M}_I(E)$ finitely supported on $I \times I$, we have 
\begin{equation}\label{Fell absorption}
\Bgnorm{\sum_{i,j \in I} e_{ij} \ot  e_{ij}\ot \cdots \ot
 e_{ij} \ot x_{ij}}_{S^p(B(\ell^2_I)^{\ot n},E)}=\Bgnorm{\sum_{i,j
\in I} e_{ij} \ot x_{ij}}_{S^p_I(E)}.
\end{equation}
Moreover, for any regular Schur multiplier
$M_A \colon S^p_I \to S^p_I$ and
any positive integer $n \geq 1$ we have
\begin{equation}\label{Majoration norme multiplicateur}
\bnorm{(M_A)^n \ot Id_E}_{S^p_I(E) \to S^p_I(E)}\leq
\bnorm{(M_A)^{\ot n} \ot Id_E}_{S^p(B(\ell^2_I)^{\ot n},E) \to S^p(B(\ell^2_I)^{\ot n},E)}.
\end{equation}
\end{lemma}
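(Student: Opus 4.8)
The plan is to deduce both assertions from a single \emph{inflation map along the diagonal} and its compression inverse. Let $U \co \ell^2_I \to (\ell^2_I)^{\ot n} = \ell^2_{I^n}$ be the isometry determined by $U\epsilon_i = \epsilon_i \ot \cdots \ot \epsilon_i$, and put $\Phi(T) = UTU^*$ and $\Psi(T) = U^*TU$. Since $U^*U = \Id$, the map $\Phi \co B(\ell^2_I) \to B(\ell^2_I)^{\ot n}$ is a normal trace-preserving $*$-homomorphism carrying $e_{ij}$ to $e_{ij}^{\ot n}$ (that is, to the diagonal matrix unit $e_{(i,\dots,i),(j,\dots,j)}$), whereas $\Psi$ is the unital completely positive compression onto the diagonal corner, sending $e_{(i,\dots,i),(j,\dots,j)}$ back to $e_{ij}$ and annihilating every matrix unit whose row or column index lies off the diagonal. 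In particular $\Psi \circ \Phi = \Id$.

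First I would check that $\Phi$ and $\Psi$ meet the hypotheses of Proposition~\ref{prop-tensorisation of CP maps}(2). The map $\Phi$ is completely contractive on $B(\ell^2_I)$ as a $*$-homomorphism and $\Psi$ is completely contractive there as a unital completely positive map. Moreover the trace pairing yields $\tr\big(\Phi(S)T\big) = \tr\big(S\,\Psi(T)\big)$, so $\Phi$ and $\Psi$ are mutually predual; hence each is automatically a complete contraction on the relevant $L^1$-space as well. Since $B(\ell^2_I)$ and $B(\ell^2_I)^{\ot n}$ are hyperfinite, Proposition~\ref{prop-tensorisation of CP maps}(2) shows that $\Phi \ot \Id_E$ and $\Psi \ot \Id_E$ are complete contractions between the corresponding vector valued Schatten spaces. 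Applying $\Phi \ot \Id_E$ to $z = \sum_{i,j} e_{ij} \ot x_{ij}$ produces the left-hand side of \eqref{Fell absorption}, and applying $\Psi \ot \Id_E$ reverses this; the two contractive estimates combine into the equality \eqref{Fell absorption}.

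For the second assertion I would transfer $(M_A)^n$ through this identity. Because Schur multipliers compose by the Hadamard product, $(M_A)^n = M_{[a_{ij}^n]}$, while $(M_A)^{\ot n}$ multiplies the entry indexed by $\big((i_1,\dots,i_n),(j_1,\dots,j_n)\big)$ by $a_{i_1j_1}\cdots a_{i_nj_n}$, which on the diagonal block reduces to $a_{ij}^n$. Thus, writing $y = \sum_{i,j} e_{ij}^{\ot n} \ot x_{ij}$ for the inflation of a finitely supported $z = \sum_{i,j} e_{ij} \ot x_{ij}$, one finds that $\big((M_A)^{\ot n} \ot \Id_E\big)(y) = \sum_{i,j} a_{ij}^n\, e_{ij}^{\ot n} \ot x_{ij}$ is exactly the inflation of $\big((M_A)^n \ot \Id_E\big)(z)$. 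Invoking \eqref{Fell absorption} once for $z$ and once for $(M_A)^n(z)$ turns the obvious estimate
\[
\bnorm{\big((M_A)^{\ot n}\ot \Id_E\big)(y)} \leq \bnorm{(M_A)^{\ot n}\ot \Id_E}\,\norm{y}
\]
into $\bnorm{\big((M_A)^n \ot \Id_E\big)(z)}_{S^p_I(E)} \leq \bnorm{(M_A)^{\ot n}\ot \Id_E}\,\norm{z}_{S^p_I(E)}$, and taking the supremum over finitely supported $z$ (which are dense, and on which regularity of $M_A$ makes $(M_A)^n \ot \Id_E = (M_A\ot\Id_E)^n$ bounded) gives \eqref{Majoration norme multiplicateur}.

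The hard part is the bookkeeping of the first two paragraphs: one must verify that the \emph{non-unital} embedding $\Phi$ and its compression $\Psi$ genuinely satisfy the two-sided contractivity hypothesis of Proposition~\ref{prop-tensorisation of CP maps}(2), and in particular that each is contractive on $L^1$. I would obtain this last point for free from the mutual preduality $\tr\big(\Phi(S)T\big)=\tr\big(S\,\Psi(T)\big)$ rather than by a direct $L^1$-computation. Once the two complete contractions are in hand, both stated identities are formal consequences.
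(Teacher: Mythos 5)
Your argument is correct, and it is essentially the intended one: the paper leaves this proof to the reader as ``a variant of [Arh2, Proposition 3.4]'', whose mechanism is precisely your diagonal inflation $e_{ij}\mapsto e_{ij}^{\ot n}$ implemented by the isometry $U\epsilon_i=\epsilon_i\ot\cdots\ot\epsilon_i$, together with the compression back, both tensorised via Proposition~\ref{prop-tensorisation of CP maps}(2). Your verification of the $L^1$-contractivity by the trace duality $\tr(\Phi(S)T)=\tr(S\Psi(T))$ and the observation that $(M_A)^{\ot n}$ restricts to $(M_A)^n=M_{[a_{ij}^n]}$ on the diagonal corner complete the proof as expected.
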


We also need the following transfer results of Neuwirth and Ricard \cite{NR} between Fourier multipliers and Schur multipliers. Let $G$ be a discrete group. If $\varphi \co G \to \C$ is a complex function, we denote by $\check{M}_{\varphi}$ the Schur multiplier defined by the matrix $\check{\varphi} \in \Mat_{G}$ defined by $\check{\varphi}(g,h)=\varphi(gh^{-1})$ where $g,h \in G$. This means that we have $\check{M}_{\varphi}=M_{\check{\varphi}}$. Moreover for any integer $n\geq 0$, we have 
\begin{equation}
\label{equavarphi}
  \big(\check{M}_{\varphi}\big)^n=\check{\overbrace{(M_{\varphi})^n}}.
\end{equation}
If $G$ is amenable, by \cite[page 1172]{NR}, we have for any $1 \leq p \leq \infty$
\begin{equation}
\label{Equa transfer leq}
\bnorm{M_\varphi \ot Id_E}_{L^p(\VN(G),E) \to L^p(\VN(G),E)}
\leq \bnorm{\check{M}_{\varphi}\ot Id_E}_{S^p_G(E) \to S^p_G(E)}
\end{equation}
and
\begin{equation}
\label{Equa transfer =}
\bnorm{M_\varphi \ot Id_E}_{cb,L^p(\VN(G),E) \to L^p(\VN(G),E)}
=\bnorm{ \check{M}_{\varphi}\ot Id_E}_{cb,S^p_G(E) \to S^p_G(E)}.
\end{equation}

%

Using the identification (\ref{Case abelian AVN}), we see that Theorem \ref{Th main commutative} is a particular case of the following more general result, which improves a part of \cite[Theorem 5.1]{Arh2}.

\begin{thm}
\label{Th main}
Suppose that $G$ is an amenable discrete group. Let $(T_t)_{t \geq 0}$ be a $w^*$-continuous semigroup of selfadjoint contractive Fourier multipliers on the group von Neumann algebra $\VN(G)$. Suppose that $E$ is an OK-convex operator space. Consider $1<p<\infty$. Then $(T_t)_{t\geq 0}$ induces a strongly continuous bounded analytic semigroup $(T_t \ot Id_E)_{t \geq 0}$ of contractions on the noncommutative vector valued $L^p$-space $L^p(\VN(G),E)$.
\end{thm}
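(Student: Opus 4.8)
The plan is to establish the three assertions — strong continuity, contractivity, and bounded analyticity — separately, the last being the substantial one. Since $G$ is amenable, each selfadjoint contractive Fourier multiplier $T_t=M_{\varphi_t}$ is completely contractive, hence, being selfadjoint, contractively regular; so $T_t\ot\Id_E$ is a contraction on $L^p(\VN(G),E)$, and the strong continuity of $(T_t\ot\Id_E)_{t\geq0}$ follows from the remark recorded in Section~2 (via \cite[Proposition 5.3]{EN}). It then remains to produce, for a single integer $n$ \emph{independent of $t$}, the estimate $\bnorm{(\Id-T_t\ot\Id_E)^n}<2^n$ and to apply the criterion of Beurling recalled above.

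Before estimating, I would record a structural consequence of the semigroup law. Writing $T_t=M_{\varphi_t}$, the eigenvalues $\varphi_t(g)$ of $T_t$ on $L^2(\VN(G))$ satisfy $\varphi_{t+s}=\varphi_t\varphi_s$, $\varphi_0=1$ and $\vert\varphi_t(g)\vert\leq1$, which forces $\varphi_t=e^{-t\psi}$ with $\psi\co G\to[0,\infty)$. Hence the transferred Schur multiplier $\check{M}_{\varphi_t}$, acting diagonally on the orthonormal basis $(e_{gh})$ of $S^2_G$ with eigenvalues $\varphi_t(gh^{-1})\in(0,1]$, is a \emph{positive} selfadjoint contraction on $S^2_G$ with spectrum in $(0,1]$. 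This $L^2$-positivity is exactly the feature that will permit a dilation to projections without the loss of a factor $2$. Next I would reduce to Schur multipliers and spread the power over a tensor product: since $(\Id-T_t)^n=M_{(1-\varphi_t)^n}$ while $(\Id-\check{M}_{\varphi_t})^n=\check{M}_{(1-\varphi_t)^n}$, the transference inequality (\ref{Equa transfer leq}) gives $\bnorm{(\Id-T_t)^n\ot\Id_E}\leq\bnorm{(\Id-\check{M}_{\varphi_t})^n\ot\Id_E}_{S^p_G(E)}$, and, as $\Id-\check{M}_{\varphi_t}$ is regular, the absorption Lemma~\ref{Prop Fell absorption} bounds this by $\bnorm{(\Id-\check{M}_{\varphi_t})^{\ot n}\ot\Id_E}_{S^p(B(\ell^2_G)^{\ot n},E)}$. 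Here $(\Id-\check{M}_{\varphi_t})^{\ot n}=\prod_{k=1}^n(\Id-R_k)$, where $R_k$ is the copy of $\check{M}_{\varphi_t}$ acting on the $k$-th leg, and the $R_k$ are commuting positive selfadjoint contractions.

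The core of the argument is to replace each $R_k$ by a norm-one projection. Using $0\leq\check{M}_{\varphi_t}\leq\Id$, one dilates $\check{M}_{\varphi_t}$ to an orthogonal projection $P$ on a doubled space via the $2\times2$ matrix whose diagonal corners are $\check{M}_{\varphi_t}$ and $\Id-\check{M}_{\varphi_t}$ and whose off-diagonal entry is $\sqrt{\check{M}_{\varphi_t}(\Id-\check{M}_{\varphi_t})}$, so that $\Id-\check{M}_{\varphi_t}=V^*(\Id-P)V$ for an isometry $V$. Tensorising leg by leg gives $\prod_k(\Id-R_k)=(V^{\ot n})^*\big(\prod_k(\Id-\tilde P_k)\big)V^{\ot n}$, where the $\tilde P_k$ are the commuting norm-one projections obtained by placing $P$ in the $k$-th leg. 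Applying Lemma~\ref{Lemma of Pisier on projections} on the relevant $S^p_J(E)$-type space — which is K-convex, with constant uniform in the index set $J$ because $E$ is OK-convex — would then yield $\bnorm{\prod_k(\Id-\tilde P_k)\ot\Id_E}\leq\rho^n$ with $\rho<2$, and hence $\bnorm{(\Id-T_t)^n\ot\Id_E}\leq\rho^n<2^n$ for the fixed $n$ furnished by Pisier's lemma. I would emphasise that $(\rho,n)$ must be selected first, from the uniform K-convexity constant of the spaces $S^p_J(E)$, and only afterwards used for every $t$, so that the single exponent $n$ required by Beurling's criterion is legitimate.

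The step I expect to be the main obstacle is precisely this dilation to projections at the level of the \emph{vector-valued} Schatten spaces. The off-diagonal term $\sqrt{\check{M}_{\varphi_t}(\Id-\check{M}_{\varphi_t})}$ is produced by Hilbert-space functional calculus and is a priori controlled only on $S^2_G$, whereas the whole chain of inequalities requires $V$, $V^*$ and the projections $\tilde P_k$ to be contractively regular (complete contractions on the pertinent $L^p(E)$-spaces), so that every passage — in particular into $S^p(B(\ell^2_G)^{\ot n},E)$ — is norm-non-increasing. Arranging the dilation to be compatible with the operator-space/$L^p(E)$ structure, rather than merely with the Hilbert-space norm, is the crux: it must exploit the complete contractivity of $\check{M}_{\varphi_t}$ together with its $L^2$-positivity (or appeal to a dedicated regular dilation theorem), and this is the point at which the OK-convexity hypothesis, through Lemma~\ref{Lemma of Pisier on projections}, is genuinely brought to bear.
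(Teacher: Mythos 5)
Your overall skeleton coincides with the paper's: reduce to Schur multipliers by transference, spread the $n$-th power over a tensor product via the absorption lemma, replace the resulting commuting operators by commuting norm-one projections, and feed these into Lemma \ref{Lemma of Pisier on projections} and Beurling's criterion, with $(\rho,n)$ fixed in advance by the (uniform) K-convexity of the vector-valued spaces. But the step you yourself flag as the ``main obstacle'' is a genuine gap, and the mechanism you propose for it does not work. The $2\times 2$ dilation of $\check{M}_{\varphi_t}$ with off-diagonal corner $\sqrt{\check{M}_{\varphi_t}(\Id-\check{M}_{\varphi_t})}$ produces an orthogonal projection only on the Hilbert space $S^2_G\oplus_2 S^2_G$. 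Concretely, that off-diagonal corner is the Schur multiplier with symbol $\sqrt{\varphi_t(gh^{-1})(1-\varphi_t(gh^{-1}))}$, and there is no reason for this symbol to define a bounded (let alone contractively regular) Schur multiplier on $B(\ell^2_G)$ or on $S^p_G(E)$ for $p\neq 2$; consequently $V$, $V^*$ and the $\tilde P_k$ are not contractions on the spaces where Lemma \ref{Lemma of Pisier on projections} must be applied, and the chain of inequalities collapses precisely there. $L^2$-positivity of $\check{M}_{\varphi_t}$ is not the right currency: what is needed is a dilation whose projections are contractive on $L^p(\,\cdot\,,E)$ for every operator space $E$.

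The missing idea is a \emph{Rota dilation by conditional expectations}. The paper first embeds $\check{T}_t$ as the off-diagonal corner of the square $(W_{\frac{t}{2}})^2$ of a \emph{unital completely positive selfadjoint} Schur multiplier $W_{\frac{t}{2}}$ on $B\big(\ell^2_{\{1,2\}\times G}\big)$ (the $2\times 2$ block trick of \cite[Corollary 4.3]{Arh1}, which is a different use of doubling than yours: it upgrades the multiplier to a unital completely positive one rather than dilating it to a projection). Then Ricard's Markov dilation \cite{Ric} combined with \cite[Theorem 5.3]{HM} yields $(W_{\frac{t}{2}})^2=Q\mathbb{E}_1\pi$ with $\pi$ a trace-preserving $*$-representation into a hyperfinite von Neumann algebra $M$, $Q$ the associated conditional expectation, and $\mathbb{E}_1$ a conditional expectation onto a subalgebra. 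Since conditional expectations and trace-preserving unital normal completely positive maps are contractively regular (Proposition \ref{prop-tensorisation of CP maps}), the leg-by-leg copies $\Pi_k\ot Id_E$ of $\mathbb{E}_1$ are genuinely commuting norm-one projections on $L^p(M^{\ot n},E)$, which is K-convex by \cite[Proposition 3.5]{Arh2}; this is what legitimises the application of Lemma \ref{Lemma of Pisier on projections} and closes the argument. Without this (or an equivalent regular dilation theorem), your proof does not go through.
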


\begin{proof}
We consider the associated semigroup $\big(\check{T_t}\big)_{t\geq 0}$ of selfadjoint contractive Schur multipliers on the space $B(\ell^2_G)$. Since $G$ is amenable, for any $t \geq 0$, the Fourier multiplier $T_t$ is completely contractive on $\VN(G)$. Using the part 2 of Proposition \ref{prop-tensorisation of CP maps}, we deduce that the map $T_t \ot Id_E$ extends to a complete contraction on the space $L^p(\VN(G),E)$. By (\ref{Equa transfer =}), we see that
\begin{align*}
\bnorm{\check{T_t} \ot Id_E}_{S^p_G(E) \to S^p_G(E)}
    &\leq \bnorm{\check{T_t}\ot Id_E}_{cb,S^p_G(E) \to S^p_G(E)}\\
		&=\bnorm{T_t \ot Id_E}_{cb,L^p(\VN(G),E) \to L^p(\VN(G),E)} \leq 1.
\end{align*}
In the sequel, we denote by $(\check{T_t})^\circ$ the Schur multiplier defined by the adjoint matrix of the matrix of the Schur multiplier $\check{T_t}$. As the proof of \cite[Corollary 4.3]{Arh1}, for any $t \geq 0$, there exists Schur multipliers $S_{1,t}$ and $S_{2,t}$ on $B(\ell^2_G)$ such that
$$
W_{t}
=
\left[
  \begin{array}{cc}
    S_{1,t} & \check{T_t} \\
    (\check{T_t})^\circ  & S_{2,t} \\
  \end{array}
\right]
$$
is a completely positive unital self-adjoint Schur multiplier on $B\big(\ell^2_{\{1,2\} \times G}\big)$. Using \ref{equavarphi}, for any $t\geq 0$, we see that
\begin{equation}
	\label{bloc}
	\big(W_{\frac{t}{2}}\big)^2
=\left[
  \begin{array}{cc}
    S_{1,\frac{t}{2}}      & \check{T}_{\frac{t}{2}} \\
    \big(\check{T}_{\frac{t}{2}}\big)^\circ  & S_{2,\frac{t}{2}} \\
  \end{array}
\right]^2
=\left[
  \begin{array}{cc}
    (S_{1,\frac{t}{2}})^2                        & (\check{T}_{\frac{t}{2}})^2 \\
    \big(\check{T}_{\frac{t}{2}}\big)^{\circ 2}  & (S_{2,\frac{t}{2}})^2 \\
  \end{array}
\right]
=
\left[
  \begin{array}{cc}
    (S_{1,\frac{t}{2}})^2  &   \check{T_{t}}         \\
    (\check{T}_{t})^\circ  &   (S_{2,\frac{t}{2}})^2 \\
  \end{array}
\right]
.
\end{equation}
Combining the construction of the noncommutative Markov chain of \cite[pages 4369-4370]{Ric} and the proof of \cite[Theorem 5.3]{HM}, for any $t \geq 0$, we infer that the Schur multiplier $(W_{\frac{t}{2}})^2$ admits a Rota dilation 
$$
\big((W_{\frac{t}{2}})^2\big)^{k}=Q\mathbb{E}_k\pi,\qquad k \geq 1
$$
in the sense of \cite[Definition 10.2]{JMX} (extended to semifinite von Neumann algebras) where $\pi \co M_2(B(\ell^2_{G})) \to M$ is a normal unital faithful $*$-representation into a von Neumann algebra (equipped with a trace) which preserve the traces, where $Q \co M \to M_2(B(\ell^2_{G}))$ is the conditional expectation associated with $\pi$ and where the $\mathbb{E}_k$'s are conditional expectations onto von Neumann subalgebras of $M$. Recall that the von Neumann algebra $\Gamma_{-1}^{e}(\ell^{2,T})$ of \cite{Ric} is hyperfinite. Hence, the von Neumann algebra $M$ of the Rota Dilation is also hyperfinite. Note that we only need the case $k=1$ in the sequel of the proof. In particular, we have
$$
(W_{\frac{t}{2}})^2=Q\mathbb{E}_1\pi.
$$
We infer that
$$
Id_{M_2(B(\ell^2_{G}))}-\big(W_{\frac{t}{2}}\big)^2=Q\pi-Q\mathbb{E}_1\pi=Q(Id_{M}-\mathbb{E}_1)\pi.
$$
Now, we choose an integer $n\geq 1$ and $0<\rho<2$ as in Lemma \ref{Lemma of Pisier on projections}. Note that we have
\begin{equation}
	\label{equa complexe}
	\Big(Id_{M_2(B(\ell^2_{G}))}-\big(W_{\frac{t}{2}}\big)^2\Big)^{\ot n}=Q^{\ot n}(Id_{M}-\mathbb{E}_1)^{\ot n}\pi^{\ot n}.
\end{equation}
For any integer $1 \leq k \leq n$, we consider the completely positive operator
$$
\Pi_k=Id_{L^p(M)} \ot \cdots \ot Id_{L^p(M)} \ot \mathbb{E}_1 \ot
Id_{L^p(M)} \ot \cdots \ot Id_{L^p(M)}
$$
on the space $L^p(M^{\ot n})$. By Proposition \ref{prop-tensorisation of CP maps}, we deduce that the $\Pi_k \ot Id_E$'s induce a family of mutually commuting contractive projections on the Banach space $L^p(M^{\ot n},E)$. Moreover, by \cite[Proposition 3.5]{Arh2}, the latter space is K-convex.  Hence, we obtain that
\begin{equation}
	\label{equa strange}
\Bgnorm{\prod_{1\leq k \leq n}  \Big(Id_{L^p(M^{\ot n},E)}-(\Pi_k\ot Id_E)\Big)}_{L^p(M^{\ot n},E) \to L^p(M^{\ot n},E)} \leq \rho^n.	
\end{equation}
Furthermore, we have
\begin{align}
\label{equazert}
\MoveEqLeft
\big(Id_{L^p(M)}-\mathbb{E}_1\big)^{\ot n}\\  
&= \prod_{1\leq k \leq n} \big(Id_{L^p(M)} \ot \cdots \ot Id_{L^p(M)} \ot (Id_{L^p(M)}-\mathbb{E}_1) \ot Id_{L^p(M)} \ot \cdots \ot Id_{L^p(M)}\big)\nonumber \\
		&=\prod_{1\leq k \leq n} \big(Id_{L^p(M^{\ot n})}-Id_{L^p(M)} \ot \cdots \ot Id_{L^p(M)} \ot \mathbb{E}_1 \ot Id_{L^p(M)} \ot \cdots \ot Id_{L^p(M)}\big)\nonumber\\
		&=\prod_{1\leq k \leq n} \big(Id_{L^p(M^{\ot n})}-\Pi_k\big).\nonumber
\end{align}
Now, combining (\ref{bloc}), (\ref{Majoration norme multiplicateur}), (\ref{equa complexe}), Proposition \ref{prop-tensorisation of CP maps}, (\ref{equazert}) and (\ref{equa strange}) we obtain that
\begin{align*}
\MoveEqLeft 
\Bnorm{\big(Id_{S^p_{G}}-\check{T}_t\big)^n \ot Id_E}_{S^p_{G}(E) \to S^p_{G}(E)}\leq \Bnorm{\Big(Id_{S^p_2(S^p_{G})}-\big(W_{\frac{t}{2}}\big)^2\Big)^n\ot Id_{E}}_{S^p_2(S^p_{G}(E)) \to S^p_2(S^p_{G}(E))}\\
&\leq \Bnorm{\Big(Id_{S^p_2(S^p_{\hat{G}})}-\big(W_{\frac{t}{2}}\big)^2\Big)^{\ot n}\ot Id_{E}}_{S^p(B(\ell^2_{\{1,2\} \times G})^{\ot n},E) \to S^p(B(\ell^2_{\{1,2\} \times G})^{\ot n},E)}\\
&= \Bnorm{Q^{\ot n}(Id_{L^p(M)}-\mathbb{E}_1)^{\ot n}\pi^{\ot n} \ot Id_E}_{S^p(B(\ell^2_{\{1,2\} \times G})^{\ot n},E) \to S^p(B(\ell^2_{\{1,2\} \times G})^{\ot n},E)}\\
    & \leq \Bnorm{\big(Id_{L^p(M)}-\mathbb{E}_1\big)^{\ot n}\ot Id_{E}}_{L^p(M^{\ot n},E) \to L^p(M^{\ot n},E)}\\
		& = \Bgnorm{\prod_{1\leq k \leq n}  \Big(Id_{L^p(M^{\ot n},E)}-(\Pi_k\ot Id_E)\Big)}_{L^p(M^{\ot n},E) \to L^p(M^{\ot n},E)} \leq \rho^n.
\end{align*}
Hence, using (\ref{Equa transfer leq}) and (\ref{equavarphi}), for any $t \geq 0$, we finally obtain
\begin{align*}
\MoveEqLeft
\Bnorm{\big(Id_{S^p_{G}}-T_t\big)^n \ot Id_E}_{L^p(\VN(G),E) \to L^p(\VN(G),E)}   
		\leq \bgnorm{\check{\overbrace{\big(Id_{S^p_{G}}-T_t\big)^n}} \ot Id_E}_{S^p_{G}(E) \to S^p_{G}(E)}\\
		&\leq \Bnorm{\big(Id_{S^p_{G}}-\check{T}_t\big)^n \ot Id_E}_{S^p_{G}(E) \to S^p_{G}(E)} \leq \rho^n.
\end{align*}
We conclude by Theorem \ref{Théorème de Beurling}.
\end{proof}


The next result is an improvement of \cite[Theorem 3.7]{Arh2} and can be proved as Theorem \ref{Th main}.

\begin{thm}
\label{Th main Schur}
Let $(T_t)_{t \geq 0}$ be a $w^*$-continuous semigroup of selfadjoint contractive Schur multipliers on $B(\ell^2_I)$. Suppose that $E$ is an OK-convex operator space. Consider $1<p<\infty$. Then $(T_t)_{t\geq 0}$ induces a strongly continuous bounded analytic semigroup $(T_t \ot Id_E)_{t \geq 0}$ of contractions on the vector valued Schatten space $S^p_I(E)$.
\end{thm}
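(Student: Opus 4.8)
The plan is to mimic the proof of Theorem~\ref{Th main} line by line, working directly with the Schur multipliers $(T_t)_{t \geq 0}$ and thereby avoiding the Neuwirth--Ricard transfer (\ref{Equa transfer leq})--(\ref{Equa transfer =}), which was only needed in order to move between the group von Neumann algebra and $B(\ell^2_G)$. Two of the three assertions are immediate. Each $T_t$ is a selfadjoint contractive, hence completely contractive, Schur multiplier on $B(\ell^2_I)$, so by Proposition~\ref{prop-tensorisation of CP maps} the operator $T_t \ot Id_E$ is a complete contraction on $S^p_I(E)$; in particular it is a contraction. Since these $T_t$ are contractively regular, the strong continuity of $(T_t \ot Id_E)_{t \geq 0}$ follows from the remark based on \cite[Proposition 5.3]{EN}. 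It then remains to establish the analyticity, and by Theorem~\ref{Th�or�me de Beurling} it is enough to exhibit an integer $n \geq 1$ and a bound on $\bnorm{(Id_{S^p_I(E)}-T_t \ot Id_E)^n}$ strictly below $2^n$, uniformly in $t > 0$.

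For this I would reproduce the dilation argument of Theorem~\ref{Th main}. Writing $A_t$ for the (real, symmetric) matrix of the selfadjoint multiplier $T_t$, the semigroup law reads $(T_{t/2})^2 = T_t$ at the level of Hadamard squares of symbols, playing the role that (\ref{equavarphi}) played in the Fourier setting. As in the proof of \cite[Corollary 4.3]{Arh1}, for each $t \geq 0$ I would realize $T_t$ as an off-diagonal corner of a completely positive unital selfadjoint Schur multiplier
$$
W_t = \left[ \begin{array}{cc} S_{1,t} & T_t \\ (T_t)^\circ & S_{2,t} \end{array} \right]
$$
on $B\big(\ell^2_{\{1,2\} \times I}\big)$, so that, exactly as in (\ref{bloc}), the operator $(W_{t/2})^2$ carries $T_t$ in its $(1,2)$ corner. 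Combining the noncommutative Markov chain of \cite[pages 4369--4370]{Ric} with the proof of \cite[Theorem 5.3]{HM}, the completely positive Schur multiplier $(W_{t/2})^2$ then admits a Rota dilation $\big((W_{t/2})^2\big)^k = Q\mathbb{E}_k\pi$ over a hyperfinite von Neumann algebra $M$, whence $Id - (W_{t/2})^2 = Q(Id_M - \mathbb{E}_1)\pi$ as in (\ref{equa complexe}).

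With $n \geq 1$ and $0 < \rho < 2$ chosen from Lemma~\ref{Lemma of Pisier on projections}, I would tensorize this factorization, introduce the mutually commuting contractive projections $\Pi_k \ot Id_E$ on the space $L^p(M^{\ot n},E)$ — which is K-convex by \cite[Proposition 3.5]{Arh2} because $E$ is OK-convex — and apply Lemma~\ref{Lemma of Pisier on projections} to obtain $\bgnorm{\prod_{1 \leq k \leq n}(Id - \Pi_k \ot Id_E)} \leq \rho^n$, as in (\ref{equazert})--(\ref{equa strange}). Since $(Id_{S^p_I}-T_t)^n$ is precisely the $(1,2)$ corner of the block Schur multiplier $\big(Id - (W_{t/2})^2\big)^n$ on $S^p_2(S^p_I) = S^p_{\{1,2\} \times I}$, passing to the corner, applying the absorption estimate (\ref{Majoration norme multiplicateur}) of Lemma~\ref{Prop Fell absorption} over the index set $\{1,2\} \times I$, inserting the Rota dilation, and using Proposition~\ref{prop-tensorisation of CP maps} to discard the complete contractions $Q^{\ot n}$ and $\pi^{\ot n}$ yields
$$
\bnorm{(Id_{S^p_I}-T_t)^n \ot Id_E}_{S^p_I(E) \to S^p_I(E)} \leq \Bnorm{\big(Id_{L^p(M)}-\mathbb{E}_1\big)^{\ot n}\ot Id_E}_{L^p(M^{\ot n},E) \to L^p(M^{\ot n},E)} \leq \rho^n < 2^n.
$$
Theorem~\ref{Th�or�me de Beurling} then gives the analyticity; note that, unlike in Theorem~\ref{Th main}, there is no closing transfer step since we never left the Schur picture. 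The one genuinely delicate ingredient, exactly as in Theorem~\ref{Th main}, is the existence of the Rota dilation of $(W_{t/2})^2$ with a hyperfinite dilation algebra $M$: this is where squaring $W_{t/2}$ is essential, since $(W_{t/2})^2 = W_{t/2}^* W_{t/2}$ becomes a factorizable completely positive map of the type handled by \cite{Ric} and \cite{HM}, while at the same time producing $T_t = (T_{t/2})^2$ off the diagonal.
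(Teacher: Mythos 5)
Your proposal is correct and follows essentially the same route as the paper, which in fact gives no separate argument for this theorem but simply remarks that it ``can be proved as Theorem~\ref{Th main}''. Your adaptation --- running the dilation/K-convexity argument directly on the Schur multipliers and dropping the Neuwirth--Ricard transfer steps --- is exactly the intended modification.
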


\begin{remark}
Wo does not know if any K-convex Banach space $X$ is isomorphic to a Banach space $E$ admitting an operator space structure such that the Banach space $S^p(E)$ is K-convex for $1<p< \infty$ (i.e. $E$ is OK-convex). Moreover, it would be also interesting to examine a similar question for other Banach spaces properties (UMD, cotype...).  
\end{remark} 

\section{Semigroups of contractive selfadjoint Schur multipliers}

The description of self-adjoint contractive Schur multipliers on $B(\ell^2_I)$ (and more generally of contractive Schur multipliers) is well-known and essentially goes back to Grothendieck and was rediscovered by many authors, see \cite[Chapter 5]{Pis6} for more information. Here, we give a continuous version of this result which precisely describes the semigroups of Schur multipliers of Theorem \ref{Th main Schur} using ultraproducts, We refer to \cite[Chapter 8]{DJT} for more information on this notion. The proof illustrates the philosophy described in \cite{Tao}. A similar trick is used in the proof of \cite[Proposition 4.3]{Knu}.

\begin{thm}
\label{description Schur mult cp} 
Suppose that $A$ is a matrix of $\mathbb{M}_I$. For any $t\geq 0$, let $T_t$ be the unbounded Schur multipliers on $B\big(\ell^2_I\big)$ associated with the matrix
\begin{equation}
\label{matrix semigroup}
\Big[e^{-ta_{ij}}\Big]_{i,j \in I}.
\end{equation}
The semigroup $(T_t)_{t\geq 0}$ extends to a semigroup of selfadjoint contractive Schur multipliers \linebreak[4] $T_t \co B\big(\ell^2_I\big) \to B\big(\ell^2_I\big)$ if and only if there exists a Hilbert space $H$ and two families $(\alpha_i)_{i \in I}$ and $(\beta_j)_{j \in I}$ of elements of $H$ such that $a_{ij}=\norm{\alpha_i-\beta_j}_{H}^2$ for any $i,j \in I$.

In this case, the Hilbert space may be chosen as a real Hilbert
space and moreover, $(T_t)_{t\geq 0}$ is a w*-continuous semigroup.
\end{thm}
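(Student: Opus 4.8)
The plan is to prove both implications, treating the ``if'' part by an explicit factorisation and a Grothendieck-type criterion, and the ``only if'' part by a compactness (ultraproduct) argument that extracts, as $t\to0$, a conditionally negative definite kernel on two disjoint copies of $I$. For the ``if'' direction, assume $a_{ij}=\norm{\alpha_i-\beta_j}_H^2$ with $H$ real, so $a_{ij}=\norm{\alpha_i}^2-2\la\alpha_i,\beta_j\ra+\norm{\beta_j}^2$. I would factor
\[
e^{-ta_{ij}}=e^{-t\norm{\alpha_i}^2}\,e^{2t\la\alpha_i,\beta_j\ra}\,e^{-t\norm{\beta_j}^2}.
\]
The middle factor is treated in the exponential (symmetric Fock) space $\mathcal{F}(H)$, where $\la e^{u},e^{v}\ra=e^{\la u,v\ra}$, so $e^{2t\la\alpha_i,\beta_j\ra}=\la e^{\sqrt{2t}\alpha_i},e^{\sqrt{2t}\beta_j}\ra$. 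Absorbing the two diagonal factors, the vectors $x_i=e^{-t\norm{\alpha_i}^2}e^{\sqrt{2t}\alpha_i}$ and $y_j=e^{-t\norm{\beta_j}^2}e^{\sqrt{2t}\beta_j}$ are unit vectors with $e^{-ta_{ij}}=\la x_i,y_j\ra$. By the Grothendieck description of contractive Schur multipliers recalled before the statement (\cite[Chapter 5]{Pis6}, \cite[Corollary 8.8]{Pau}), $T_t$ is then a completely contractive Schur multiplier; it is selfadjoint since its symbol is real, and the semigroup law follows from $e^{-sa_{ij}}e^{-ta_{ij}}=e^{-(s+t)a_{ij}}$ with $T_0=\Id$.

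For the ``only if'' direction, fix $t>0$. Contractivity of $T_t$ and the same description yield a real Hilbert space $H_t$ and unit vectors $(x_i^{t})_{i\in I}$, $(y_j^{t})_{j\in I}$ with $e^{-ta_{ij}}=\la x_i^{t},y_j^{t}\ra_{H_t}$, the realness of the symbol allowing $H_t$ to be taken real. Since $\norm{x_i^{t}-y_j^{t}}^2=2(1-e^{-ta_{ij}})$, the rescaled vectors $\alpha_i^{t}=x_i^{t}/\sqrt{2t}$ and $\beta_j^{t}=y_j^{t}/\sqrt{2t}$ satisfy
\[
\norm{\alpha_i^{t}-\beta_j^{t}}^2=\frac{1-e^{-ta_{ij}}}{t}\longrightarrow a_{ij}\qquad(t\to0).
\]
I would then consider, on the disjoint union $I\sqcup I$, the squared-distance kernel $\psi_t$ of the family $(\alpha_i^{t})\cup(\beta_j^{t})$, which is real, conditionally negative definite and vanishes on the diagonal, and whose cross block converges to $[a_{ij}]$.

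The main obstacle is that the individual vectors $\alpha_i^{t}$ blow up like $t^{-1/2}$, and the within-block distances $\norm{\alpha_i^{t}-\alpha_{i'}^{t}}$ are not controlled by the data $a_{ij}$. This is resolved by the triangle inequality through a fixed base index $j_0\in I$: one has $\norm{\alpha_i^{t}-\alpha_{i'}^{t}}\leq\norm{\alpha_i^{t}-\beta_{j_0}^{t}}+\norm{\beta_{j_0}^{t}-\alpha_{i'}^{t}}$, whose right-hand side converges to $\sqrt{a_{ij_0}}+\sqrt{a_{i'j_0}}$, and symmetrically for the $\beta$'s. Hence every entry of $\psi_t$ stays bounded as $t\to0$. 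Fixing a nonprincipal ultrafilter $\mathcal{U}$ along a sequence $t_n\to0$ (this is the philosophy of \cite{Tao}; see \cite[Chapter 8]{DJT} for ultraproducts), the pointwise limit $\psi=\lim_{\mathcal{U}}\psi_{t_n}$ exists and is finite, and remains a real conditionally negative definite kernel on $I\sqcup I$ vanishing on the diagonal with cross block exactly $[a_{ij}]$, since each defining inequality passes to the ultralimit. Applying the realisation theorem quoted in Section 2 (\cite[Proposition 3.2, page 82]{BCR}, \cite[Theorem C.2.3]{BHV}) gives a real Hilbert space $H$ and a map $\xi\co I\sqcup I\to H$ with $\psi(u,v)=\norm{\xi(u)-\xi(v)}_H^2$; setting $\alpha_i=\xi(i,1)$ and $\beta_j=\xi(j,2)$ yields $a_{ij}=\norm{\alpha_i-\beta_j}_H^2$ with $H$ real.

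Finally, $w^*$-continuity follows by checking that for $B\in B(\ell^2_I)$ and $\omega\in S^1_I$ the function $t\mapsto\tr\big(\omega T_t(B)\big)=\sum_{i,j}\omega_{ji}e^{-ta_{ij}}b_{ij}$ is continuous, by dominated convergence for finitely supported $\omega$ and then by density using $\norm{T_t}\leq1$. I expect the genuinely delicate point of the whole argument to be precisely the uniform control of the within-block distances as $t\to0$, everything else being either a cited fact or a routine limiting computation.
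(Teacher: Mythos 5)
Your argument is correct and follows essentially the same strategy as the paper's: double the index set, observe that the rescaled kernel $\tfrac{1-e^{-ta_{ij}}}{t}$ tends to $a_{ij}$ as $t\to 0$, control the remaining entries by the triangle inequality through a fixed base point, pass to an ultralimit, and apply the realisation theorem for real conditionally negative definite kernels vanishing on the diagonal. The two deviations are presentational rather than substantive. For the extraction of vectors at each fixed $t$, the paper does not invoke the Grothendieck factorisation directly: it embeds $T_{1/n}$ as the off-diagonal block of a \emph{unital completely positive} selfadjoint Schur multiplier on $B(\ell^2_{\{1,2\}\times I})$, whose symbol is then a positive definite kernel with $1$'s on the diagonal, i.e.\ automatically a Gram matrix of \emph{unit} vectors. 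Your shortcut via the Grothendieck criterion only yields vectors of norm $\leq 1$, while the identity $\norm{x_i^t-y_j^t}^2=2\big(1-e^{-ta_{ij}}\big)$ you rely on genuinely requires unit vectors; this is the one point you must patch, and the standard padding $x_i\mapsto x_i\oplus\sqrt{1-\norm{x_i}^2}\,e_i\oplus 0$, $y_j\mapsto y_j\oplus 0\oplus\sqrt{1-\norm{y_j}^2}\,e_j$ in $H\oplus\ell^2_I\oplus\ell^2_I$ does it in one line. The second difference is that you take the ultralimit of the scalar kernels and apply the realisation theorem once at the end, whereas the paper applies the realisation theorem at each stage $n$ and then takes an ultraproduct of the resulting Hilbert spaces; these are interchangeable, and your version is marginally more elementary since it only uses ultralimits of bounded real sequences. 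For the converse implication, your explicit symmetric Fock space factorisation into unit vectors is a self-contained substitute for the paper's appeal to \cite[Proposition 5.4]{Arh1} via the block matrix $\big[e^{-t\norm{\gamma_{(n,i)}-\gamma_{(m,j)}}^2}\big]$, and the remaining points (selfadjointness from realness of the symbol, the semigroup law, $w^*$-continuity by trace duality and density) match the paper.
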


\begin{proof}
First, suppose that the semigroup $(T_t)_{t\geq 0}$ extends to a semigroup of selfadjoint contractive Schur multipliers $T_t \co B\big(\ell^2_I\big) \to B\big(\ell^2_I\big)$. In particular, for any integer $n\geq 1$, the Schur multiplier $T_{\frac{1}{n}} \co B\big(\ell^2_I\big) \to B\big(\ell^2_I\big)$ is contractive and selfadjoint. Thus, as the proof of \cite[Corollary 4.3]{Arh2}, we can find matrices $S_{1,n}$, $S_{2,n} \in  \mathbb{M}_{I}$ such that the block matrix
$$
B_n=\left[
  \begin{array}{cc}
    S_{1,n} & \big[e^{-\frac{a_{ij}}{n}}\big] \\
    \big[ e^{-\frac{a_{ji}}{n}}\big] & S_{2,n} \\
  \end{array}
\right] 
$$
defines a selfadjoint unital completely positive Schur multiplier on $B\big(\ell^2_{\{1,2\} \times I}\big)$. This matrix identifies to a matrix $[b_{n,k,i,m,j}]_{(k,i)\in\{1,2\}\times I ,(m,j) \in \{1,2\}\times I} \in \mathbb{M}_{\{1,2\}\times I}$ such that for any $i,j \in I$
\begin{align*}
b_{n,1,i,1,j}&= (S_{1,n})_{i,j}, &
b_{n,1,i,2,j} &= e^{-\frac{a_{ij}}{n}},\\
b_{n,2,i,1,j} &= e^{-\frac{a_{ji}}{n}}, &
b_{n,2,i,2,j} &= (S_{2,n})_{i,j}.
\end{align*}
Since the matrix $B_n$ defines a completely positive Schur multiplier, the map 
\begin{equation*}
\begin{array}{cccc}
&   (\{1,2\} \times I)\times (\{1,2\}\times I)  &  \longrightarrow   &  \R  \\
 &  \displaystyle \big((k,i),(m,j)\big)   &  \longmapsto       & b_{n,k,i,m,j}   \\
\end{array}
\end{equation*} is a positive definite kernel. Then it is obvious that the map 
\begin{equation*}
\begin{array}{cccc}
&   (\{1,2\} \times I)\times (\{1,2\}\times I)  &  \longrightarrow   &  \R  \\
 &  \displaystyle \big((k,i),(m,j)\big)   &  \longmapsto       & n(1-b_{n,k,i,m,j})   \\
\end{array}
\end{equation*}
is a real-valued conditionally negative definite kernel which vanishes on the diagonal of $(\{1,2\} \times I)\times (\{1,2\}\times I)$. We deduce that there exist a real Hilbert space $H_n$ and a map $\xi^n \co \{1,2\}\times I \to H_n$ such that
$$
\bnorm{\xi_{k,i}^n-\xi_{m,j}^n}_{H_n}^2 = n\big(1-b_{n,k,i,m,j}\big),\quad k,m\in \{1,2\},\ i,j \in I. 
$$
By adding a constant to $\xi^n$, it is not difficult to see that we can suppose that $\xi_{2,1}^n=0$ for any integer $n \geq 1$. For any $a \in \R$, we have $\frac{1- e^{-ta}}{t} \xra[t \to 0]{} a$. Hence, for any $i,j \in I$, we see that 
$$
\bnorm{\xi_{1,i}^n-\xi_{2,j}^n}_{H_n}^2 = n\big(1-b_{n,1,i,2,j}\big) = n\big(1- e^{-\frac{a_{ij}}{n}}\big) \xra[n \to \infty]{} a_{ij}.
$$
Since $\xi_{2,1}^n=0$, we obtain in particular $\bnorm{\xi_{1,i}^n}_{H_n}^2 \xra[n \to \infty]{} a_{i1}$ for any integer $i \in I$. We infer that $\big(\norm{\xi_{1,i}^n}_{H_n})_{n\geq 1}$ is a bounded sequence for each $i \in I$. Moreover, note that for any $i,j \in I$ we have
$$
\bnorm{\xi_{2,j}^n}_{H_n} \leq \bnorm{\xi_{2,j}^n-\xi_{1,i}^n}_{H_n}+\bnorm{\xi_{1,i}^n}_{H_n}.
$$
Thus, for any $j \in I$, we deduce that $\big(\norm{\xi_{2,j}^n}_{H_n})_{n\geq 1}$ is also a bounded sequence.

Now, we introduce the ultraproduct $H=\prod_{\ul}H_n$ of the Hilbert spaces $H_n$ with respect to some ultrafilter $\ul$ on $\N$ refining the Fr\'echet filter. For any $k \in \{1,2\}$ and any $i \in I$, let $\xi_{k,i} \in H$ the equivalence class of the sequence $\big(\xi_{k,i}^n\big)_{n \geq 1}$. The above computations give
\begin{align}
\label{equa 33}
\bnorm{\xi_{1,i}-\xi_{2,j}}_{H}^2=a_{ij}, \quad  i,j \in I.
\end{align}
For any $i,j \in I$ we let $\alpha_i = \xi_{1,i}$ and $\beta_j = \xi_{2,j}$. Then Equation~\eqref{equa 33} becomes
$$
\bnorm{\alpha_i-\beta_j}_{H}^2=a_{ij}, \quad i,j \in I.
$$
Conversely, suppose that there exists a Hilbert space $H$ and two
families $(\alpha_i)_{i \in I}$ and $(\beta_j)_{j \in I}$ of
elements of $H$ such that for any $t\geq 0$ the Schur multiplier
$T_t \co B\big(\ell^2_I\big) \to B\big(\ell^2_I\big)$ is associated
with the matrix
$$
A_t=\Big[e^{-t\norm{\alpha_i-\beta_j}_{H}^2}\Big]_{i,j \in I}.
$$
Now, for any $t\geq 0$, we define the following matrices of
$\mathbb{M}_I$
$$
B_t=\Big[e^{-t\norm{\alpha_i-\alpha_j}_{H}^2}\Big]_{i,j\in I},\ \
C_t=\Big[e^{-t\norm{\beta_i-\beta_j}_{H}^2}\Big]_{i,j \in I}\ \
\text{ and }\ \
D_t=\Big[e^{-t\norm{\beta_i-\alpha_j}_{H}^2}\Big]_{i,j \in I}.
$$
For any $i\in I$ and any $n\in \{1,2\}$, we define the vector
$\gamma_{(n,i)}$ of $H$ by
$$
\gamma_{(n,i)}=\left\{
\begin{array}{cl}
         \alpha_i & \text{if}\  n=1   \ \text{and}\  i \in I       \\
         \beta_i  & \text{if}\  n=2   \ \text{and}\  i \in I.        \\
\end{array}\right.
$$
Now, by the identification $\mathbb{M}_2(\mathbb{M}_I)\simeq
\mathbb{M}_{\{1,2\}\times I}$, the block matrix $ \left[
  \begin{array}{cc}
    B_t & A_t \\
    D_t & C_t \\
  \end{array}
\right] $ of $\mathbb{M}_2(\mathbb{M}_I)$ can be identified with the matrix
\begin{equation}
	\label{matrice}
	\Big[ e^{-t\norm{\gamma_{(n,i)}-\gamma_{(m,j)}}_{H}^2}
\Big]_{(n,i)\in\{1,2\}\times I ,(m,j)\in\{1,2\}\times I}
\end{equation}
of $\mathbb{M}_{\{1,2\}\times I}$. Using \cite[Proposition 5.4]{Arh1}, we deduce that, for any $t\geq 0$, the Schur multiplier associated with the matrix (\ref{matrice}) is contractive on $B\big(\ell^2_{\{1,2\}\times I}\big)$. We deduce that
$T_t \co B\big(\ell^2_I\big) \to B\big(\ell^2_I\big)$ is also
contractive. An alternative proof of this implication can be achieved by adapting the proof of \cite[Proposition 8.17]{JMX}.

Finally, it is easy to see that $(T_t)_{t\geq 0}$ is a w*-continuous semigroup.
\end{proof}

\textbf{Acknowledgment}. The author would like to thank to Christian Le Merdy to provide him with the preprint \cite{Xu} and Marius Junge and Eric Ricard for some discussions. Finally, the referee deserves thanks for a careful reading of the paper.

\small

\footnotesize{ \n Laboratoire de Math\'ematiques, Universit\'e de
Franche-Comt\'e,
25030 Besan\c{c}on Cedex,  France\\
cedric.arhancet@univ-fcomte.fr\hskip.3cm
\end{document}